\documentclass[11pt]{article}
\usepackage[utf8]{inputenc}
\usepackage{amsmath,amssymb,amsthm}
\usepackage{mathtools}
\usepackage{fancyhdr}
\usepackage{xcolor}
\usepackage{geometry}
\usepackage{enumitem}
\usepackage{thmtools}
\usepackage{thm-restate}
\usepackage{hyperref}
\usepackage{parskip}

\newtheorem{theorem}{Theorem}[section]

\newtheorem{lemma}[theorem]{Lemma}

\newtheorem*{claim*}{Claim}

\theoremstyle{definition}

\newtheorem{example}[theorem]{Example}

\newtheorem{remark}[theorem]{Remark}

\DeclareMathOperator{\range}{range}

\def\deg{\textsf{deg}}

\begin{document}

\title{Alternative Entropy Bounds for Perfect Matchings in Bipartite Graphs\thanks{MSC: 05C70, 05C07, 15A15}}
\author{
Yusong Du\thanks{Institute of Mathematical Sciences, ShanghaiTech University. ~Email: {\tt duys2025@shanghaitech.edu.cn}} \and
Boqing Xue\thanks{Institute of Mathematical Sciences, ShanghaiTech University. ~Email: {\tt xuebq@shanghaitech.edu.cn}}
}
\date{}
\maketitle

\begin{abstract}
We refine Radhakrishnan's entropy proof of the Br\'egman--Minc bound by introducing a terminal-set framework in which selected vertices are revealed last. This gives new degree-sensitive upper bounds for the number of perfect matchings in bipartite graphs and an explicit formula for single-vertex terminal sets. The bounds recover the standard
Br\'egman equality family and improve the estimate for certain nonuniform degree sequences. We also obtain a $C_4$-free refinement complementary to the edge-count bound of Araujo, Balogh and Wang.
\end{abstract}

\section{Introduction}\label{section1}

Entropy methods have become a useful tool in combinatorics, especially for enumeration problems where direct counting is difficult. In this paper we use entropy to study upper bounds on the number of perfect matchings in bipartite graphs.

A perfect matching of a graph $G=(V,E)$ is a subset $\sigma \subseteq E$ such that every vertex of $G$ is incident with exactly one edge of $\sigma$. We denote the set of all perfect matchings of $G$ by $\mathcal M_{\mathrm{perf}}(G)$.

A classical theorem of Br\'egman gives a sharp upper bound for the number of perfect matchings in a bipartite graph in terms of the degrees on one side.

\begin{theorem}[Br\'egman]
Let $G$ be a bipartite graph with vertex classes $V=\{ v_1,\cdots,v_n\}$ and $W = \{w_1,\cdots,w_n\}$. Let $d_i:=\deg(v_i)\geq 1$ for $i=1,2,\ldots,n$. Then
\[
|\mathcal{M}_{\mathrm{perf}}(G)| \le \prod_{i=1}^n (d_i!)^{\frac{1}{d_i}} .
\]
\end{theorem}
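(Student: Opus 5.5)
We follow Radhakrishnan's entropy argument, since the paper intends to refine it. If $\mathcal M_{\mathrm{perf}}(G)=\emptyset$ there is nothing to prove, so assume it is nonempty. Let $\sigma$ be a perfect matching chosen uniformly at random from $\mathcal M_{\mathrm{perf}}(G)$. Identify $\sigma$ with the bijection $\sigma\colon V\to W$, so $\sigma(v_i)$ is the partner of $v_i$. Then
\[
\log|\mathcal M_{\mathrm{perf}}(G)| = H(\sigma) = H(\sigma(v_1),\dots,\sigma(v_n)).
\]
For any fixed ordering $\tau$ of $V$, the chain rule gives $H(\sigma)=\sum_i H(\sigma(v_i)\mid \sigma(v_j): j \text{ before } i \text{ in } \tau)$.

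The key step is to choose the ordering $\tau$ uniformly at random, independently of $\sigma$, and to average the chain-rule expansion over $\tau$. Fix $v_i$, together with $\sigma$ and $\tau$. Define
\[
k_i = k_i(\sigma,\tau) := \#\{\, w\in N(v_i) : w \text{ is not matched by } \sigma \text{ to a vertex preceding } v_i \text{ in } \tau\,\}.
\]
This is the number of neighbours of $v_i$ still available when $v_i$ is revealed. Note that $\sigma(v_i)$ is always among them, so $k_i\ge 1$. Given the revealed values, $\sigma(v_i)$ ranges over at most $k_i$ values. Moreover, $k_i$ is a function of the conditioning information together with $\tau$. Conditioning additionally on $k_i$ and applying the uniform bound therefore gives
\[
H(\sigma(v_i)\mid \text{past},\tau)\le \mathbb E[\log k_i].
\]

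Next we compute the distribution of $k_i$ for a fixed $\sigma$. The $d_i$ neighbours of $v_i$ are $\sigma(u)$ for exactly $d_i$ distinct vertices $u\in V$, one of which is $v_i$ itself. Their relative order under a uniform random $\tau$ is uniform. Hence the position of $v_i$ among these $d_i$ vertices is uniform on $\{1,\dots,d_i\}$, and $k_i$ is uniform on $\{1,\dots,d_i\}$, independently of $\sigma$. Consequently
\[
\mathbb E[\log k_i] = \frac{1}{d_i}\sum_{k=1}^{d_i}\log k = \frac{\log d_i!}{d_i}.
\]
Summing over $i$ and using $H(\sigma)=H(\sigma\mid\tau)$ (by independence) yields
\[
\log|\mathcal M_{\mathrm{perf}}(G)| \le \sum_i \frac{\log d_i!}{d_i},
\]
which is the claim after exponentiating.

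The main technical care lies in the averaging step. We need $H(\sigma\mid\tau)=\sum_i H(\sigma(v_i)\mid \tau, \text{past}_\tau(v_i))$, where the past depends on $\tau$. This is legitimate by applying the chain rule pointwise for each fixed $\tau$ and then averaging over $\tau$. We also need the bound $H(X\mid Y)\le \mathbb E\log|\operatorname{supp}(X\mid Y)|$, applied with $Y$ including $\tau$ and the past values, which determine $k_i$.
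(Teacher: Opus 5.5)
Your proof is correct and is exactly Radhakrishnan's entropy argument, which the paper cites for this theorem rather than reproving. It is also the degenerate case $L_\sharp=\emptyset$ of the paper's Lemma~\ref{lem_terminal_set}: there every $t_k(\sigma)=0$, the terminal term vanishes, and each vertex contributes $H_0(d_k)=\frac{\log d_k!}{d_k}$ through the same uniform-rank computation, with the only cosmetic difference that the paper averages over orderings as a finite sum instead of using an independent random $\tau$.
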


This theorem was conjectured by Minc \cite{minc1963upper} and proved by Br\'egman \cite{bregman1973some}. Schrijver \cite{schrijver1978short} gave a short proof, while Alon and Spencer \cite{alon2016probabilistic} obtained a related proof by analyzing a randomized procedure. Later, Radhakrishnan \cite{Radhakrishnan1997AnEP} gave an elegant entropy proof of Br\'egman's theorem.

Br\'egman's theorem was also extended to general, not necessarily bipartite, graphs by Kahn and Lov\'asz, and related proofs and rediscoveries were given by Alon and Friedland~\cite{alon2008maximum},  Egorychev~\cite{Egorychev}, Friedland~\cite{friedland2008upper} and Cutler and Radcliffe~\cite{cutler2011entropy}. Cutler and Radcliffe's proof also uses entropy.

Br\'egman's bound is best possible in general. Indeed, if $d$ divides $n$ and the graph $G_{n,d}$ is the disjoint union of $n/d$ copies of the complete bipartite graph $K_{d,d}$, then
\[
|\mathcal M_{\mathrm{perf}}(G)|=(d!)^{\frac{n}{d}},
\]
which is exactly the value of Br\'egman's upper bound.

On the other hand, Br\'egman's bound can be far from the truth for some graphs. Consider the following bipartite graph $G_0$ with vertex classes $V=\{v_1,\ldots,v_n\}$ and $W=\{w_1,\ldots,w_n\}$. Let $v_n$ be adjacent to every vertex of $W$, and for $1\le i\le n-1$, let $v_i$ be adjacent only to $w_i$. Then $G_0$ has exactly $1$ perfect matching. However, Br\'egman's theorem gives $|\mathcal M_{\mathrm{perf}}(G_0)| \le (n!)^{1/n}$. Thus the contribution of the high-degree vertex $v_n$ is substantially
overestimated.

Our goal is to refine the entropy argument by forcing selected vertices of $V$ to be revealed last. This modification reduces the entropy contribution of vertices whose matching choices are largely determined by the rest of the graph. The first consequence is the following closed-form bound, which corresponds to the case where one vertex is revealed last.

\begin{theorem}\label{main_thm_his1}
Let $G$ be a bipartite graph with vertex classes $V=\{ v_1,\cdots,v_n\}$ and $W = \{w_1,\cdots,w_n\}$. Denote $d_i=\deg(v_i)$ and $D_i=\deg(w_i)$ for $i=1,2,\ldots, n$. Assume that $1\leq d_1 \le d_2 \le \cdots  \le d_n$ and $1\leq D_1 \le D_2 \le \cdots \le D_n$. Let $\ell:=\min\{i:\, d_i>1\}$, with the convention that $\ell=n+1$ if all $d_i$ are equal to $1$. Then
\[
|\mathcal M_{\mathrm{perf}}(G)| \le \prod_{i=\ell}^{p} (d_i!)^{\frac{1}{d_i-1}} \prod_{i=p+1}^{n-1} (d_i!)^{\frac{1}{d_i}},
\]
where $p= \min\{D_n+\ell-2, n-1\}$.
\end{theorem}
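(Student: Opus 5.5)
We would follow Radhakrishnan's entropy proof of Br\'egman's theorem, but reveal the vertex $v_n$ last. Let $\sigma$ be a uniformly random element of $\mathcal M_{\mathrm{perf}}(G)$, viewed as the bijection $v_i\mapsto\sigma(v_i)\in W$, so that $H(\sigma)=\log|\mathcal M_{\mathrm{perf}}(G)|$. Independently, let $\tau$ be a uniformly random ordering of $\{v_1,\dots,v_{n-1}\}$, and append $v_n$ at the end.

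By the chain rule, $H(\sigma)$ is the sum, over positions in this order, of the conditional entropies of $\sigma(v_i)$ given the previously revealed values. For each $v_i$, let $N_i$ be the number of neighbours of $v_i$ that are not matched, under $\sigma$, to any vertex preceding $v_i$. Then, exactly as in Radhakrishnan's argument (averaging over $\tau$ and using $H(X\mid Y)\le \mathbb E\log|\range|$),
\[
\log|\mathcal M_{\mathrm{perf}}(G)| \le \mathbb E_{\sigma}\sum_{i=1}^{n} \mathbb E_{\tau}\log N_i .
\]
Since $v_n$ is last, $N_n=1$ and $v_n$ contributes nothing.

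Now fix $\sigma$ and $i<n$, and write $w^*=\sigma(v_n)$. The neighbours of $v_i$ are matched under $\sigma$ to $d_i$ distinct vertices, one of which is $v_i$ itself.
\begin{itemize}
\item If $w^*\notin N(v_i)$, all of these partners other than $v_i$ lie among $v_1,\dots,v_{n-1}$. Under the random order their relative position with $v_i$ is uniform, so $N_i$ is uniform on $\{1,\dots,d_i\}$ and $\mathbb E_\tau\log N_i=\log (d_i!)^{1/d_i}$.
\item If $w^*\in N(v_i)$, the partner $v_n$ always comes after $v_i$. Hence $N_i$ is uniform on $\{2,\dots,d_i\}$, giving $\mathbb E_\tau\log N_i=\log (d_i!)^{1/(d_i-1)}$ when $d_i\ge 2$.
\end{itemize}
Vertices with $d_i=1$ contribute $0$ in either case. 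The set $S_\sigma=\{i<n:\ w^*\in N(v_i)\}$ has size $\deg(w^*)-1\le D_n-1$. So for each fixed $\sigma$ the inner sum is at most
\[
\sum_{i<n,\ d_i\ge2}\log (d_i!)^{1/d_i}+\sum_{i\in S_\sigma,\ d_i\ge 2}\log r(d_i), \qquad r(d):=(d!)^{\frac{1}{d(d-1)}} ,
\]
since $(d!)^{1/(d-1)}=(d!)^{1/d}\,r(d)$.

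The remaining step, which I expect to be the only real obstacle, is a purely numerical monotonicity fact: $r(d)$ is decreasing for $d\ge2$ (for instance $r(2)=\sqrt2$ and $r(3)=6^{1/6}$). Equivalently, $g(d)=\log(d!)/(d(d-1))$ is decreasing. I would prove this by checking that $g(d)\ge g(d+1)$ reduces to $(d+1)\log d!\ge (d-1)\log (d+1)!$, i.e.\ $2\log d!\ge (d-1)\log(d+1)$. This last inequality follows by pairing factors, since $k(d+1-k)\ge d$ for $1\le k\le d$, so that $(d!)^2=\prod_{k=1}^d k(d+1-k)\ge d^{d}\ge (d+1)^{d-1}$ (the last step because $(1+1/d)^{d-1}<e\le d$ for $d\ge3$, with $d=2$ checked directly).

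Given monotonicity, all factors $r(d_i)\ge1$, so the extra term is maximized by taking $S_\sigma$ as large as allowed, namely $\min\{D_n-1,\,n-\ell\}$ indices among $\{i<n: d_i\ge2\}$, chosen with the smallest degrees. By the ordering $d_\ell\le d_{\ell+1}\le\cdots$, these are the indices $\ell,\dots,p$ with $p=\min\{D_n+\ell-2,n-1\}$. This bound is uniform in $\sigma$, so taking $\mathbb E_\sigma$ and exponentiating yields exactly the stated product.
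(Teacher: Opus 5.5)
Your proof is correct and is essentially the paper's argument. The paper obtains this theorem as the case $h=1$ of its terminal-set bound: Lemma~\ref{lem_terminal_set} with $L_\sharp=\{n\}$ gives contributions $H_0(d_k)$ or $H_1(d_k)$, with $\sum_k t_k(\sigma)\le D_n-1$, and then the monotonicity of $\Delta_0(d)=\log r(d)$ from Lemma~\ref{ineq_2} selects the indices $\ell,\dots,p$ greedily. The only cosmetic difference is that you prove $(d!)^2\ge (d+1)^{d-1}$ via $(d!)^2\ge d^d$, whereas the paper pairs factors as $(1+i)(d+1-i)\ge d+1$ and gets it directly.
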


Here and throughout the paper, an empty product is interpreted as $1$. Moreover, in the situation that $d_i=1$ for all $1\leq i\leq n$, we interpret $\ell=n+1$ and the asserted bound is $1$. The above theorem uses the $n-1$ smallest degrees in $V$ and the maximum degree in $W$. 

For the graph $G_0$ above, we have $d_1=\cdots=d_{n-1}=1$, $d_n=n$ and $D_n=2$. Thus $\ell=n$ and $p=n-1$. So Theorem~\ref{main_thm_his1} gives $|\mathcal M_{\mathrm{perf}}(G_0)|\le 1$, which is sharp.

The bound also preserves the standard equality example $G_{n,d}$ for Br\'egman’s theorem. In this example, assuming $d\geq 2$ without loss of generality, one has $\ell=1$, $D_n=d$, $p=d-1$, and Theorem~\ref{main_thm_his1} gives
\[
|\mathcal M_{\mathrm{perf}}(G_{n,d})| \le \prod_{i=1}^{d-1}(d!)^{\frac{1}{d-1}}
\prod_{i=d}^{n-1}(d!)^{\frac{1}{d}}
=
(d!)^{\frac{n}{d}},
\]
again matching the exact number of perfect matchings.

We now state the general form of our bound. Let
\[
H_t(d):=\frac{1}{d-t}\log\frac{d!}{t!},
\qquad 0\le t\le d-1,
\]
and for $0\le t\le d-2$, define the marginal increment
\[
\Delta_t(d)
:=
H_{t+1}(d)-H_t(d)
=
\frac{1}{(d-t)(d-t-1)}
\log\frac{d!}{t!(t+1)^{d-t}}.
\]
For an integer $1\le h<n$, define
\[
M_h:=\sum_{j=1}^h D_{n+1-j}-h.
\]
Let
\[
\Gamma
:=
\bigl\{
\Delta_t(d_i):
1\le i\le n-h,\
0\le t\le \min\{h-1,d_i-2\}
\bigr\},
\]
where $\Gamma$ is treated as a multiset. Write its elements in non-increasing order as $\lambda_1\ge \lambda_2\ge \cdots\ge \lambda_{|\Gamma|}$, and set $q:=\min\{M_h,|\Gamma|\}$.

\begin{theorem} \label{main_thm}
Let $G$ be a bipartite graph with vertex classes
$V=\{v_1,\ldots,v_n\}$ and $W=\{w_1,\ldots,w_n\}$. Denote
$d_i=\deg(v_i)$ and $D_i=\deg(w_i)$, and assume $1\leq d_1\le d_2\le \cdots\le d_n$, $1\leq D_1\le D_2\le \cdots\le D_n$. Then, for every $1\le h<n$,
\[
|\mathcal M_{\mathrm{perf}}(G)|
\le
h!
\prod_{i=1}^{n-h}(d_i!)^\frac{1}{d_i}
\prod_{r=1}^{q}2^{\lambda_r}.
\]
\end{theorem}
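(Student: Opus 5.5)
The plan is to run Radhakrishnan's entropy argument with the $h$ highest-degree vertices of $V$ forced to the end of the revealing order. Throughout, $\log$ is base $2$. Let $\sigma$ be a uniformly random perfect matching, so $H(\sigma)=\log|\mathcal M_{\mathrm{perf}}(G)|$. Put $T:=\{v_{n-h+1},\dots,v_n\}$ and $S:=V\setminus T=\{v_1,\dots,v_{n-h}\}$. Independently of $\sigma$, take a uniformly random ordering $\pi$ of $S$. The vertices are revealed in the order: $S$ according to $\pi$, then $T$.

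By the chain rule, $H(\sigma)=H(\sigma|_S)+H(\sigma|_T\mid\sigma|_S)$. Once $\sigma|_S$ is known, $\sigma|_T$ is a bijection from $T$ onto the $h$ remaining vertices of $W$, so $H(\sigma|_T\mid\sigma|_S)\le\log h!$. For $\sigma|_S$ we use the standard averaging over $\pi$:
\[
H(\sigma|_S)\le \sum_{i=1}^{n-h}\mathbb E_{\sigma,\pi}\log N_i,
\]
where $N_i$ is the number of neighbours of $v_i$ not already matched to a vertex of $S$ preceding $v_i$ in $\pi$.

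Next we analyse $N_i$ for a fixed $\sigma$. Let $t_i:=|N(v_i)\cap\sigma(T)|$. These $t_i$ neighbours are never consumed before $v_i$, because $T$ is revealed last. The other $d_i-t_i$ neighbours are matched into $S$, and one of them is $\sigma(v_i)$. Since $\pi$ is independent of $\sigma$, the rank of $v_i$ among the $S$-partners of these $d_i-t_i$ neighbours is uniform on $\{1,\dots,d_i-t_i\}$. Hence $N_i$ is $t_i$ plus a uniform variable on $\{1,\dots,d_i-t_i\}$, and
\[
\mathbb E_\pi\log N_i=\frac{1}{d_i-t_i}\sum_{k=t_i+1}^{d_i}\log k=H_{t_i}(d_i).
\]
Moreover $t_i\le d_i-1$, since $\sigma(v_i)\notin\sigma(T)$, and $t_i\le h$.

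The main step is to bound $\sum_i H_{t_i}(d_i)$ uniformly in $\sigma$. We telescope:
\[
H_{t_i}(d_i)=H_0(d_i)+\sum_{s=0}^{t_i-1}\Delta_s(d_i),
\]
where each index satisfies $s\le\min\{h-1,d_i-2\}$, so every increment used is an element of $\Gamma$, and different $(i,s)$ give different elements of the multiset. We also need $\Delta_t(d)\ge0$, i.e.\ $d!\ge t!(t+1)^{d-t}$, which holds because each factor $k>t$ of $d!/t!$ is at least $t+1$.

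The number of increments used is $\sum_i t_i$. This counts the edges between $S$ and $\sigma(T)$. Each $w\in\sigma(T)$ spends one edge on its partner in $T$, so
\[
\sum_i t_i\le\sum_{w\in\sigma(T)}(\deg w-1)\le\sum_{j=1}^hD_{n+1-j}-h=M_h.
\]
Thus at most $q=\min\{M_h,|\Gamma|\}$ distinct, nonnegative elements of $\Gamma$ are summed, and their sum is at most $\lambda_1+\dots+\lambda_q$. This gives, for every $\sigma$,
\[
\sum_{i=1}^{n-h}\mathbb E_\pi\log N_i\le\sum_{i=1}^{n-h}\frac{\log d_i!}{d_i}+\sum_{r=1}^q\lambda_r .
\]
Averaging over $\sigma$, adding $\log h!$ and exponentiating yields the claimed bound.

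The step I expect to need the most care is the rigorous form of the averaging inequality for $H(\sigma|_S)$ when the order is random but $T$ is fixed last. I would copy Radhakrishnan's argument: condition on $\pi$, apply the chain rule in the order $\pi$, and bound each conditional entropy by $\mathbb E\log N_i$, using that the support of $\sigma(v_i)$ given the earlier choices has size $N_i$. This goes through unchanged because the vertices of $T$ only come afterwards.
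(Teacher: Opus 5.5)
Your proposal is correct and follows essentially the same route as the paper: it is the terminal-set version of Radhakrishnan's argument (Lemma~\ref{lem_terminal_set}), with the uniform relative rank giving $H_{t_k(\sigma)}(d_k)$, the terminal block bounded by $\log h!$, the edge count $\sum_k t_k(\sigma)\le M_h$, and the telescoping into increments from $\Gamma$ so that the top $q$ can be taken. The averaging step you flag is handled in the paper exactly as you describe: the chain rule is applied for each fixed order of the non-terminal vertices, and the result is then averaged over these orders.
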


Theorem~\ref{main_thm_his1} is the case $h=1$ of
Theorem~\ref{main_thm}. The parameter $h$ gives a way to choose how many
vertices are revealed last. Larger values of $h$ may lead to better
bounds when these terminal vertices are highly constrained by the rest
of the graph.

In Example \ref{ex1}, we construct a family of graphs for which Theorem~\ref{main_thm} gives the exact number of perfect matchings, while Br\'egman's bound is exponentially larger. 

We next consider $C_4$-free bipartite graphs. Here, a graph is said to be $C_4$-free if it does not contain any cycle of length $4$. In \cite{araujo2022maximum}, Araujo, Balogh and Wang proved that if $G$ is a $C_4$-free balanced bipartite graph with $2n$ vertices and $n+k$ edges, then
\[
|\mathcal M_{\mathrm{perf}}(G)|
\le
2^{\frac{k}{3}}.
\]
This bound is sharp for disjoint unions of $6$-cycles. It is an
edge-count bound, while our estimate below depends more explicitly on
the local degree structure.

Let $G$ be a bipartite graph as stated in Theorem \ref{main_thm}. Suppose further that $G$ is $C_4$-free. Take $L\subseteq [n]:=\{1,2,\ldots,n\}$ to be the set of indices of the neighbours of $w_n$ in $V$. Define the multiset
\[
\Gamma'
:=
\bigl\{
\Delta_t(d_i):
i\in [n]\setminus L,\
0\le t\le \min\{D_n-1,d_i-2\}
\bigr\}.
\]
Write the elements of $\Gamma'$ in non-increasing order as $\mu_1\ge\mu_2\ge\cdots\ge\mu_{|\Gamma'|}$. Define
\[
M':=\sum_{j=1}^{D_n}D_{n+1-j}-2D_n+1,
\qquad
q':=\min\{M',|\Gamma'|\}.
\]

\begin{theorem} \label{main_thm_c4}
Let $G$ be a $C_4$-free bipartite graph as stated above. Then
\[
|\mathcal M_{\mathrm{perf}}(G)|
\le
\prod_{i\in [n]\setminus L}(d_i!)^{\frac{1}{d_i}}
\prod_{r=1}^{q'}2^{\mu_r}.
\]
\end{theorem}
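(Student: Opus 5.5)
The plan is to run Radhakrishnan's entropy argument with the terminal set $T:=L$, the set of neighbours of $w_n$ in $V$, so $|T|=D_n$. Let $\sigma$ be a uniformly random perfect matching, so that $\log|\mathcal M_{\mathrm{perf}}(G)|=H(\sigma)$, and write $\sigma=(\sigma(v_1),\ldots,\sigma(v_n))$. Choose a uniformly random ordering of $[n]\setminus L$, reveal those vertices first, and then reveal the vertices of $L$ last. By the chain rule, $H(\sigma)$ splits into the contributions of the non-terminal vertices plus the conditional entropy of $\sigma|_L$ given $\sigma|_{[n]\setminus L}$. The $C_4$-free hypothesis should make this last term vanish. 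Once all non-terminal vertices are matched, the set $S$ of $D_n$ unmatched vertices of $W$ is determined, and $\sigma|_L$ is a perfect matching between $L$ and $S$. Exactly one vertex of $L$ is matched to $w_n$. Since $w_n\in S$, every other $w\in S\setminus\{w_n\}$ has at most one neighbour in $L$, because two neighbours in $L$ together with $w_n$ would give a $C_4$. A short counting argument should then show the matching of $L$ to $S$ is forced: each $w\in S\setminus\{w_n\}$ must be matched into $L$ through its unique $L$-neighbour, and $w_n$ takes the remaining vertex. So the terminal term is $0$, which explains why no $h!$ factor appears and why the product runs only over $[n]\setminus L$.

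For $i\notin L$, I would bound $H(\sigma(v_i)\mid \text{earlier revealed})$ in the standard way. Let $N_i$ be the number of neighbours of $v_i$ still free when $v_i$ is revealed. Ignoring the terminal set, Radhakrishnan's averaging gives $N_i$ uniform on $\{1,\ldots,d_i\}$, and hence the contribution $\frac1{d_i}\log d_i!$. With terminals revealed last, some neighbours of $v_i$ are matched to vertices of $L$ and are therefore always free at $v_i$'s turn. If $t_i$ is the number of neighbours of $v_i$ matched into $L$, then $N_i$ is uniform on $\{t_i+1,\ldots,d_i\}$, so the contribution becomes $E[H_{t_i}(d_i)]$ with $H_t$ as defined before Theorem~\ref{main_thm}. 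Telescoping gives $H_{t}(d)=H_0(d)+\sum_{s<t}\Delta_s(d)$, and for $t$ near $d_i$ one caps at $t\le d_i-1$. Also $t_i\le |L|=D_n$, since $L$ is matched to distinct vertices, which gives the range $0\le t\le\min\{D_n-1,d_i-2\}$ in $\Gamma'$. Hence
\[
H(\sigma)\le\sum_{i\notin L}\frac{\log d_i!}{d_i}+E\Bigl[\sum_{i\notin L}\sum_{s<t_i}\Delta_s(d_i)\Bigr],
\]
and the inner double sum is a sum of distinct elements of the multiset $\Gamma'$. Note also $\Delta_s\ge0$.

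It remains to bound the number of terms, that is, $\sum_{i\notin L}t_i$, by $M'$. Then the sum is at most the $M'$ largest elements of $\Gamma'$, which yields the claimed product of $2^{\mu_r}$ with $\log$ base $2$. Now $\sum_i t_i$ counts pairs ($v_i\notin L$, edge $v_iw$ with $w=\sigma(u)$ for some $u\in L$). This is $\sum_{u\in L}(\deg\sigma(u)-1)$ minus the edges from each $\sigma(u)$ to $L\setminus\{u\}$. The $D_n$ vertices $\sigma(u)$ are distinct, so $\sum\deg\sigma(u)\le\sum_{j=1}^{D_n}D_{n+1-j}$. One of them is $w_n$, whose remaining $D_n-1$ edges all go into $L$ and should be subtracted. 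This gives $\sum_{j}D_{n+1-j}-D_n-(D_n-1)=M'$.

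The main obstacle will be making the conditional-uniformity claim rigorous: conditioning on $\sigma$, the random order of $[n]\setminus L$ must make $N_i-t_i$ uniform on $\{1,\ldots,d_i-t_i\}$. I would handle this as in Radhakrishnan, by noting that, given $\sigma$, the $d_i-t_i$ neighbours of $v_i$ matched into $[n]\setminus L$ appear in uniformly random relative order. The second delicate point is the $C_4$-forcing argument that the terminal entropy is zero.
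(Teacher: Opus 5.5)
Your proposal is correct and follows essentially the same route as the paper. You reveal $L=N(w_n)$ last and use the terminal-set version of Radhakrishnan's averaging to get $\mathbb E[H_{t_k}(d_k)]$, bound $\sum_{k\notin L}t_k\le M'$ by taking the top $D_n$ degrees of $W$ minus the $2D_n-1$ edges between $\sigma(L)$ and $L$, and use the $C_4$-forcing argument to make the terminal conditional entropy vanish; your edge count only needs each $\sigma(u)$ to have at least one neighbour in $L$, so you invoke $C_4$-freeness only for the terminal term, a harmless simplification of the paper's ``exactly $2D_n-1$''.
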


In Example \ref{ex2}, we show graphs such that Theorem~\ref{main_thm_c4} gives the exact number of perfect matchings, while the Araujo--Balogh--Wang bound is exponentially larger. 

Moreover, the Araujo--Balogh--Wang bound is sharp for the disjoint union of $C_6$'s. Our Theorem \ref{main_thm_c4} is worse at first glance. However, an application to each connected component still gives the exact number of perfect matchings (See Remark \ref{remark_C6free} for more details).

\section{Preliminaries}\label{section2}

A random variable $X$ is a function from a probability space $(\Omega, \mathcal F, \mathbb P)$ to some measurable space $(\Omega', \mathcal F')$. For a measurable map $f$ from $(\Omega', \mathcal F')$ to $(\Omega'', \mathcal F'')$, the random variable $f(X)$ is defined to be the composition map $f\circ X$.

In this paper, we assume that all random variables take only finitely many values, and all logarithms are base $2$. For convenience, we adopt $0 \log 0 = 0$. For a random variable $X$, let $\range(X)$ denote the set of values $X$ takes with positive probability, and define the entropy
\[
H(X) := -\sum_{x\in \range(X)} \mathbb P(X=x) \log \mathbb P(X=x).
\]

For a pair of random variables $X, Y$, the conditional entropy of $X$ given $Y$ is defined by
\begin{equation} \label{eq_condition_entropy}
H(X\mid Y) := \sum_{y\in \range(Y)} \mathbb P(Y=y) H(X\mid Y=y),
\end{equation}
where
\[
H(X\mid Y=y) = - \underset{x\in \range(X)}{\sum} \mathbb P(X=x\mid Y=y) \log \mathbb P(X=x\mid Y=y).
\]

The following are basic properties of entropy.

\begin{lemma} \label{lem_entropy_basic}
Let $X,Y,X_1,\ldots,X_n$ be random variables. The following statements hold.

(\romannumeral1) $H(X)\le \log |\range(X)|$. The equality holds if and only if $X$ is uniformly distributed on $\range(X)$.

(\romannumeral2) $H(X_1,\cdots,X_n)=\sum_{i=1}^n H(X_i\mid X_1,\cdots,X_{i-1})$.

(\romannumeral3) $H(X\mid Y)\le H(X)$.
\end{lemma}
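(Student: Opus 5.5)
These are standard facts, and the plan is to derive all three directly from the definitions above using only Jensen's inequality for the concave function $\log$.

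\textbf{Part (i).} Write $p_x=\mathbb P(X=x)$ for $x\in\range(X)$, so that every $p_x>0$ and $\sum p_x=1$. Then
\[
H(X)=\sum_x p_x\log\frac{1}{p_x}\le \log\sum_x p_x\cdot\frac{1}{p_x}=\log|\range(X)|,
\]
by Jensen's inequality for the strictly concave function $\log$. Equality holds if and only if $1/p_x$ is constant over $\range(X)$, that is, if and only if $X$ is uniform on its range.

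\textbf{Part (ii).} First prove the two-variable identity $H(X,Y)=H(Y)+H(X\mid Y)$. Use the factorization
\[
\mathbb P(X=x,Y=y)=\mathbb P(Y=y)\,\mathbb P(X=x\mid Y=y),
\]
take $-\log$ of both sides, and average over the joint distribution of $(X,Y)$. The term $-\log\mathbb P(Y=y)$ averages to $H(Y)$. The term $-\log\mathbb P(X=x\mid Y=y)$ averages, after grouping by $y$, to exactly the expression in \eqref{eq_condition_entropy}. Pairs with zero probability contribute nothing, by the convention $0\log 0=0$.

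Next, induct on $n$. Apply the two-variable identity with $Y=(X_1,\ldots,X_{n-1})$ and $X=X_n$, treating the tuple as a single random variable with finitely many values. This gives
\[
H(X_1,\ldots,X_n)=H(X_1,\ldots,X_{n-1})+H(X_n\mid X_1,\ldots,X_{n-1}),
\]
and the induction hypothesis handles the first term.

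\textbf{Part (iii).} Compute
\[
H(X\mid Y)-H(X)=\sum_{x,y}\mathbb P(x,y)\log\frac{\mathbb P(x)\,\mathbb P(y)}{\mathbb P(x,y)},
\]
where the sum runs over pairs with $\mathbb P(x,y)>0$. By Jensen's inequality this is at most
\[
\log\sum_{x,y:\,\mathbb P(x,y)>0}\mathbb P(x)\,\mathbb P(y)\le\log 1=0.
\]
An equivalent route is to note that each $H(X\mid Y=y)$ is the entropy of the conditional law. Then $H(X\mid Y)$ is an average of $\phi$ evaluated at those laws, where $\phi(p)=-\sum p_x\log p_x$. The average of the conditional laws is the law of $X$, so concavity of $\phi$ gives the claim.

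\textbf{Main obstacle.} The only delicate point is bookkeeping of zero-probability terms. Sums must be restricted to the ranges, and in part (iii) the restricted sum of $\mathbb P(x)\mathbb P(y)$ must be shown to be at most $1$, not equal to $1$. The rest is routine.
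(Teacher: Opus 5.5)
Your proof is correct and complete. Jensen's inequality for the strictly concave $\log$ gives (i) together with its equality case. Factoring the joint law and inducting on $n$ gives (ii), with the $0\log 0=0$ convention absorbing the terms where $\mathbb P(X=x\mid Y=y)=0$ in the paper's definition of $H(X\mid Y=y)$. The observation $\sum_{\mathbb P(x,y)>0}\mathbb P(x)\mathbb P(y)\le 1$ closes (iii).

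The paper gives no proof of this lemma; it lists the three facts as standard properties of entropy. Your argument is the textbook justification the paper takes for granted. Your alternative route for (iii), via concavity of the entropy functional, is valid but not an independent proof, since that concavity is normally derived from the same log-sum step.
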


Indeed, Lemma \ref{lem_entropy_basic}(\romannumeral1) also holds for conditional distributions, which leads to the following.

\begin{lemma} \label{lem_conditional_log}
Let $X, Y$ be random variables, and $f$ be a measurable function from $\range(Y)$ to $[1,+\infty)$. Suppose that, for any $y\in \range(Y)$, the random variable $X$ can take at most $f(y)$ distinct values given that $Y=y$ occurs. Then
\[
H(X|Y) \leq \mathbb E \log f(Y) .
\]
\end{lemma}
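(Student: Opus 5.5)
We prove the statement by conditioning on each value $y\in\range(Y)$ separately and then averaging. Start from the definition \eqref{eq_condition_entropy}:
\[
H(X\mid Y)=\sum_{y\in\range(Y)}\mathbb P(Y=y)\,H(X\mid Y=y).
\]
Fix $y\in\range(Y)$. Under the conditional probability measure $\mathbb P(\cdot\mid Y=y)$, the variable $X$ is an ordinary finitely-valued random variable. Call it $X_y$; its distribution is $x\mapsto \mathbb P(X=x\mid Y=y)$. By definition, $H(X\mid Y=y)$ is exactly the entropy $H(X_y)$. By hypothesis, $|\range(X_y)|\le f(y)$: the values taken with positive conditional probability are among the at most $f(y)$ values $X$ can take given $Y=y$.

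Applying Lemma \ref{lem_entropy_basic}(\romannumeral1) to $X_y$ gives
\[
H(X\mid Y=y)=H(X_y)\le \log|\range(X_y)|\le \log f(y).
\]
The second inequality uses only the monotonicity of $\log$. The assumption $f\ge 1$ ensures $\log f(y)\ge 0$ is well defined, and it is consistent with $|\range(X_y)|\ge 1$. If one prefers not to reuse the lemma, one can instead apply Jensen's inequality to the concave function $\log$:
\[
\sum_x p_x\log(1/p_x)\le \log\sum_{x:p_x>0}1 ,
\]
where $p_x=\mathbb P(X=x\mid Y=y)$.

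Finally, multiply by $\mathbb P(Y=y)\ge 0$ and sum over $y\in\range(Y)$. This gives
\[
H(X\mid Y)\le\sum_{y\in\range(Y)}\mathbb P(Y=y)\log f(y)=\mathbb E\log f(Y).
\]
Since $Y$ takes finitely many values, every sum here is finite. No step is a genuine obstacle. The only point requiring care is to check that Lemma \ref{lem_entropy_basic}(\romannumeral1) legitimately applies to a conditional distribution, i.e. that $H(X\mid Y=y)$ is literally the entropy of a random variable whose range has size at most $f(y)$; this is what the construction of $X_y$ above ensures.
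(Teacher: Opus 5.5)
Your proof is correct and follows the same route as the paper: apply Lemma \ref{lem_entropy_basic}(\romannumeral1) to the conditional distribution of $X$ given $Y=y$ to get $H(X\mid Y=y)\le\log f(y)$, then average against $\mathbb P(Y=y)$ using \eqref{eq_condition_entropy}. Your explicit construction of $X_y$ and the Jensen alternative are sound extra detail, but the argument is the paper's.
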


\begin{proof}
Applying Lemma \ref{lem_entropy_basic}(\romannumeral1) to the conditional distribution of $X$ given $Y=y$, one obtains that
\[
H(X|Y=y) \leq \log f(y),\qquad y\in \range(Y).
\]
By \eqref{eq_condition_entropy},
\[
H(X|Y) \leq \sum\limits_{y\in \range(Y)} \mathbb P(Y=y)\log  f(y) = \mathbb E \log f(Y).
\]
\end{proof}

Recall that 
\[
H_t(d) := \frac{1}{d-t} \log \frac{d!}{t!},\qquad 0\le t\le d-1.
\]

For $H_0(d)$ and $H_1(d)$, we have the following inequality.

\begin{lemma}\label{ineq_2}
When $2 \le d_i \le d_j$,
\[
H_0(d_i) + H_1(d_j) \le H_0(d_j) + H_1(d_i)
\]
Equivalently, \(\Delta_0(d) = H_1(d)-H_0(d)\) is non-increasing in d for \(d\ge 2\).
\end{lemma}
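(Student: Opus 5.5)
We need to show that $\Delta_0(d)=H_1(d)-H_0(d)$ is non-increasing for $d\ge 2$. Writing $\log d! - \log 1! = \log d!$, we have
\[
H_0(d)=\frac{\log d!}{d},\qquad H_1(d)=\frac{\log d!}{d-1},\qquad
\Delta_0(d)=\log d!\Bigl(\frac{1}{d-1}-\frac{1}{d}\Bigr)=\frac{\log d!}{d(d-1)},
\]
which agrees with the general formula for $\Delta_t(d)$ at $t=0$. The equivalence with the displayed inequality is immediate: $H_0(d_i)+H_1(d_j)\le H_0(d_j)+H_1(d_i)$ rearranges to $\Delta_0(d_j)\le\Delta_0(d_i)$, which for $d_i\le d_j$ is exactly monotonicity. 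It therefore suffices to prove $\Delta_0(d+1)\le\Delta_0(d)$ for every integer $d\ge 2$ and then chain these inequalities.

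For the one-step comparison, I would clear denominators. The inequality
\[
\frac{\log (d+1)!}{(d+1)d}\le\frac{\log d!}{d(d-1)}
\]
is equivalent to $(d-1)\log(d+1)!\le (d+1)\log d!$. Using $\log (d+1)! = \log d! + \log(d+1)$, this becomes $(d-1)\log(d+1)\le 2\log d!$, that is,
\[
(d+1)^{d-1}\le (d!)^2 .
\]

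The remaining task, and the only real step, is to verify $(d!)^2\ge (d+1)^{d-1}$. I would pair the factors symmetrically:
\[
(d!)^2=\prod_{k=1}^{d} k(d+1-k).
\]
For $1\le k\le d$ we have $k(d+1-k)-d=(k-1)(d-k)\ge 0$, so each factor satisfies $k(d+1-k)\ge d$. Hence $(d!)^2\ge d^d$. It is then enough to show $d^d\ge (d+1)^{d-1}$, which is equivalent to $d\ge \bigl(1+\tfrac1d\bigr)^{d-1}$. Since $\bigl(1+\tfrac1d\bigr)^{d-1}<e<3$, this holds for $d\ge3$, and for $d=2$ one checks directly that $4\ge 3$. (Alternatively, $(d!)^2\ge (d+1)^{d-1}$ can be checked directly in small cases and then proved by induction: passing from $d$ to $d+1$, the left side gains a factor $(d+1)^2$, while the right side gains the factor $(d+2)^d/(d+1)^{d-1}=(d+1)\bigl(1+\tfrac{1}{d+1}\bigr)^d\le e(d+1)$, and $(d+1)^2\ge e(d+1)$ for $d\ge2$.)

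The argument is elementary, and the only place requiring care is this factorial inequality. All inequalities are non-strict, so ties are allowed, and the case $d_i=d_j$ is trivial.
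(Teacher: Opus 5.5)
Your proof is correct and follows the paper's route: the same reduction to $(d!)^2\ge (d+1)^{d-1}$, proved by a symmetric pairing of factors. The only difference is that the paper drops the factor $1$ and pairs the remaining $d-1$ factors as $(1+i)(d+1-i)=(d+1)+i(d-i)\ge d+1$ for $1\le i\le d-1$. This gives $(d+1)^{d-1}$ directly and avoids your intermediate bound $d^d$ and the estimate $(1+1/d)^{d-1}<e$.
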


\begin{proof}\label{pf_1}
It suffices to prove that
\[
H_1(d) - H_0(d) \ge H_1(d+1) - H_0(d+1)
\]
for $d \ge 2$, which is equivalent to
\[
\frac{\log(d!)}{d(d-1)} \ge \frac{\log((d+1)!)}{(d+1)d},
\]
i.e.,
\begin{equation} \label{eq_dfraction_square}
(d!)^2 \ge (d+1)^{d-1}.
\end{equation}

Note that
\[
(d!)^2 = 1^2 \cdot \left(\prod_{i=1}^{d-1}(1+i)\right)^2  = \prod_{i=1}^{d-1} (1+i)(d+1-i)  \ge \prod_{i=1}^{d-1} (d+1),
\]
since $(1+i)(d+1-i) = (d+1) + i(d-i) \ge (d+1)$.
So \eqref{eq_dfraction_square} holds, and the proof is complete.
\end{proof}

\section{A General Terminal-Set Entropy Framework}

In the rest of the paper, we always view a perfect matching $\sigma$ of the graph $G$ as a permutation in $S_n$, i.e., a bijection $\sigma:[n] \rightarrow [n]$,  where $\sigma(i) = j$ if and only if $v_iw_j \in \sigma$. Without confusion, we also write $\sigma=(\sigma(1),\cdots, \sigma(n))$.


We will work under a terminal-set framework. Let $L_\sharp \subseteq [n]$ denote the set of vertices to be revealed last, and write $L_\flat=[n]\setminus L_\sharp$ and  $h=|L_\sharp|$. 
For $i\in [n]$, set $\mathcal{N}_i := \{j : v_i w_j \in E(G)\}$, which records the neighbours of $v_i$ in $W$, and also denote $t_i(\sigma):=|\mathcal N_i\cap \sigma(L_\sharp)|$. 
Moreover, for a set $J\subseteq [n]$ of indices, we write $V(J)=\{v_j:\, j\in J\}$ and $W(J)=\{w_j:\, j\in J\}$. 

\begin{lemma} \label{lem_terminal_set}
Assume that $\mathcal M_{\mathrm{perf}}(G)\neq\emptyset$, and let $\Sigma$ be a random variable that is uniformly distributed on $\mathcal M_{\mathrm{perf}}(G)$. With the notation above,
\[
\log |\mathcal M_{\mathrm{perf}}(G)| \le \frac{1}{|\mathcal M_{\mathrm{perf}}(G)|}
\sum_{\sigma\in\mathcal M_{\mathrm{perf}}(G)}
\sum_{k\in L_\flat}
H_{t_k(\sigma)}(d_k) + H\big(\Sigma|_{L_\sharp}\, \big|\, \Sigma|_{L_\flat}\big).
\]
\end{lemma}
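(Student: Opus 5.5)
We follow Radhakrishnan's random-ordering entropy argument, but only over the indices in $L_\flat$, and keep the block $L_\sharp$ intact as a single final term. Since $\Sigma$ is uniform, $\log|\mathcal M_{\mathrm{perf}}(G)| = H(\Sigma) = H(\Sigma|_{L_\flat}) + H(\Sigma|_{L_\sharp}\mid \Sigma|_{L_\flat})$ by Lemma~\ref{lem_entropy_basic}(ii). So it suffices to show
\[
H(\Sigma|_{L_\flat}) \le \mathbb E \sum_{k\in L_\flat} H_{t_k(\Sigma)}(d_k).
\]

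Fix a uniformly random ordering $\tau$ of $L_\flat$, independent of $\Sigma$. For each fixed $\tau$, the chain rule gives
\[
H(\Sigma|_{L_\flat}) = \sum_{k\in L_\flat} H\bigl(\Sigma(k)\mid \Sigma(j),\ j\in L_\flat \text{ before } k \text{ in } \tau\bigr).
\]
Averaging over $\tau$ and absorbing $\tau$ into the conditioning gives $H(\Sigma|_{L_\flat})=\sum_k H(\Sigma(k)\mid \tau, \Sigma(j)_{j\prec_\tau k})$. We then apply Lemma~\ref{lem_conditional_log} with the conditioning variable $Y=(\tau,\Sigma)$. This is legitimate because, given the earlier values and $\tau$, the number of possible values of $\Sigma(k)$ is at most
\[
N_k(\sigma,\tau) := \bigl|\mathcal N_k \setminus \sigma(\{j\in L_\flat: j\prec_\tau k\})\bigr|,
\]
and this count is itself determined by $(\tau,\sigma)$. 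Strictly, we condition on the earlier values and use that the count of available values is a function of them. To bound the entropy by $\mathbb E\log N_k(\Sigma,\tau)$, we apply Lemma~\ref{lem_entropy_basic}(i) conditionally and then Jensen/averaging, exactly as in Radhakrishnan's proof. Thus
\[
H(\Sigma|_{L_\flat}) \le \mathbb E_{\sigma}\sum_{k\in L_\flat}\mathbb E_\tau \log N_k(\sigma,\tau).
\]

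Now fix $\sigma$ and $k\in L_\flat$. The set $\mathcal N_k$ consists of $d_k$ indices. One of them is $\sigma(k)$. Exactly $t_k(\sigma)$ of them lie in $\sigma(L_\sharp)$; these are never removed, since $L_\sharp$ is never revealed before $k$. The remaining $d_k-1-t_k$ are $\sigma(j)$ for distinct $j\in L_\flat\setminus\{k\}$, and these are removed exactly when $j\prec_\tau k$. Under a uniform ordering, the relative order of $k$ among these $m:=d_k-t_k$ elements ($k$ itself together with the $d_k-1-t_k$ others) is uniform. Hence the number of them preceding $k$ is uniform on $\{0,\dots,m-1\}$, and $N_k$ is uniform on $\{t_k+1,\dots,d_k\}$. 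Therefore
\[
\mathbb E_\tau\log N_k = \frac{1}{d_k-t_k}\sum_{s=t_k+1}^{d_k}\log s = H_{t_k(\sigma)}(d_k).
\]
Summing over $k$ and averaging over uniform $\sigma$ gives the claimed bound.

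The main obstacle is the justification of the step that passes from conditioning on the revealed values to $\mathbb E\log N_k$. The number of available choices depends on $\sigma(L_\sharp)$, which is not yet revealed. Even so, $N_k$ counts neighbours not yet used, so the number of possible values of $\Sigma(k)$ given the history is at most $|\mathcal N_k\setminus\{\text{revealed}\}|$.

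This gives a bound of $\log$ of that larger count, not of $N_k$. Radhakrishnan's trick resolves this. We refine the conditioning so that the relevant quantity becomes a function of the conditioning variable, using the conditioning on $(\tau,\text{history})$ together with the fact that conditional entropy decreases under additional conditioning (Lemma~\ref{lem_entropy_basic}(iii)). Concretely, we condition additionally on $\Sigma|_{L_\sharp}$ (more precisely on the set $\sigma(L_\sharp)\cap\mathcal N_k$ and the value $N_k$). This only lowers the entropy, after which Lemma~\ref{lem_conditional_log} applies with $f=N_k$. That refinement is what makes $t_k(\sigma)$ appear, and it is the step that needs care.
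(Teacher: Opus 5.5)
Your core argument is correct and is essentially the paper's proof. You reveal $L_\flat$ in a uniformly random order before $L_\sharp$, and bound each chain-rule term by $\mathbb E\log N_k$ via Lemma~\ref{lem_conditional_log}. Your $N_k$ is exactly the paper's $M_{k,\sigma,\tau}$, a function of $\tau$ and the revealed history. You then use that the rank of $k$ among the $d_k-t_k(\sigma)$ elements of $\sigma^{-1}(\mathcal N_k)\cap L_\flat$ is uniform. Hence $N_k$ is uniform on $\{t_k(\sigma)+1,\dots,d_k\}$ and $\mathbb E_\tau\log N_k=H_{t_k(\sigma)}(d_k)$.

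The closing ``main obstacle'' discussion, however, is mistaken and should be deleted, because the remedy it proposes is not valid.

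First, there is no ``larger count'': $|\mathcal N_k\setminus\{\text{revealed}\}|$ is $N_k$ by definition. The $t_k(\sigma)$ neighbours in $\sigma(L_\sharp)$ are simply among the still-available ones. You never need to know which they are, and they are exactly why the support of $N_k$ starts at $t_k(\sigma)+1$. So $t_k(\sigma)$ enters through the revealing order, as your own computation for fixed $\sigma$ says, not through a refinement of the conditioning.

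Second, the refinement points the wrong way: $H(\Sigma(k)\mid \tau,\text{history},\Sigma|_{L_\sharp})\le H(\Sigma(k)\mid \tau,\text{history})$. Bounding the more-conditioned entropy therefore gives no upper bound on the chain-rule term you must control. With $\Sigma|_{L_\sharp}$ known, the available count would in fact drop to $N_k-t_k(\sigma)$. That corresponds to revealing $L_\sharp$ first, a different decomposition that does not give this lemma.

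Similarly, the conditioning variable in Lemma~\ref{lem_conditional_log} must be $(\tau,\text{history})$, not $(\tau,\Sigma)$, as you yourself correct.
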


\begin{proof}
Let $\Sigma$ be a random variable that is uniformly distributed on $\mathcal{M}_{\mathrm{perf}}(G)$. By Lemma \ref{lem_entropy_basic}(\romannumeral1), one has
\[
H(\Sigma) = \log |\mathcal{M}_{\mathrm{perf}}(G)|.
\]

We reveal the vertices in $V(L_\flat)$ first, one by one in a uniformly random order, and reveal the vertices in $V(L_\sharp)$ last. Denote by
\[
\widetilde S_n := \{\tau \in S_n : \tau_j \in L_\flat \text{ for } 1\le j \le n-h, \ \tau_j \in L_\sharp \text{ for } n-h+1\le j \le n\},
\]
which has $h!(n-h)!$ elements. For $\tau\in \widetilde S_n$, write $\tau_j=\tau(j)$ $(j=1,2,\ldots,n)$ for simplicity.

For every $\tau\in\widetilde S_n$, Lemma \ref{lem_entropy_basic}(\romannumeral2) gives
\[
H(\Sigma)
=
\sum_{j=1}^{n-h}
H\bigl(
\Sigma(\tau_j)
\mid
\Sigma(\tau_1),\ldots,\Sigma(\tau_{j-1})
\bigr)
+
H\bigl(
\Sigma|_{L_\sharp}
\mid
\Sigma|_{L_\flat}
\bigr).
\]
Averaging over $\tau\in\widetilde S_n$ and defining
\[
\mathcal H_\flat
:=
\frac{1}{h!(n-h)!}
\sum_{\tau\in\widetilde S_n}
\sum_{j=1}^{n-h}
H\bigl(
\Sigma(\tau_j)
\mid
\Sigma(\tau_1),\ldots,\Sigma(\tau_{j-1})
\bigr),
\]
we obtain
\[
H(\Sigma)
=
\mathcal H_\flat
+
H\bigl(
\Sigma|_{L_\sharp}
\mid
\Sigma|_{L_\flat}
\bigr).
\]

Changing the order of the sum leads to
\[
\mathcal H_\flat
:=
\frac{1}{h!(n-h)!}
\sum_{\tau\in\widetilde S_n}
\sum_{k\in L_\flat} H(\Sigma(k) \mid \Sigma(\tau_1), \cdots, \Sigma(\tau_{j-1})),
\]
where $j:= \tau^{-1}(k)$ depends on both $k$ and $\tau$. 

Recall that $\mathcal N_i$ records the neighbours of $v_i$ in $W$. For $k\in L_\flat$ and $\tau\in  \widetilde S_n$, define
\[
M_{k,\sigma,\tau}:=\bigl|\mathcal N_k\setminus\{\sigma(\tau_1),\ldots,\sigma(\tau_{j-1})\}\bigr|,\qquad \sigma\in \mathcal M_{\mathrm{perf}}(G),
\]
and let $M_{k,\Sigma,\tau}:=\bigl|\mathcal N_k\setminus\{\Sigma(\tau_1),\ldots,\Sigma(\tau_{j-1})\}\bigr|$ be the corresponding random variable. Then, by Lemma \ref{lem_conditional_log},
\[
H\bigl(\Sigma(k)\mid \Sigma(\tau_1),\ldots,\Sigma(\tau_{j-1})\bigr)
\le
\mathbb E\log M_{k,\Sigma,\tau}.
\]

Now we expand the expectation over the uniform matching variable $\Sigma$, and group the terms by the value of $M_{k,\sigma,\tau}$, obtaining
\[
\mathbb{E}  \log M_{k,\Sigma,\tau} = \sum\limits_{\sigma\in \mathcal M_{\mathrm{perf}}(G)} (\log M_{k,\sigma,\tau}) \, \mathbb P(\Sigma =\sigma)= \sum\limits_{i=1}^{d_k} \sum\limits_{\sigma\in \mathcal M_{\mathrm{perf}}(G)\atop M_{k,\sigma,\tau}=i} \frac{\log i}{|\mathcal M_{\mathrm{perf}}(G)|}.
\]
And then
\[
\mathcal H_\flat \leq \frac{1}{h!(n-h)!}\frac{1}{|\mathcal M_{\mathrm{perf}}(G)|} \sum_{k\in L_\flat} \sum\limits_{\sigma \in \mathcal M_{\mathrm{perf}}(G)}  \sum\limits_{i=1}^{d_k} (\log i) \, \big|\{\tau \in \widetilde S_n:\, M_{k,\sigma,\tau}=i\}\big|.
\]

At this moment, let $k$ and $\sigma$ be given. Recall $t_k(\sigma):=|\mathcal N_k\cap \sigma(L_\sharp)|=|\sigma^{-1}(\mathcal N_k)\cap L_\sharp|$. Write $m_k(\sigma) := |\sigma^{-1}(\mathcal N_k)\cap L_\flat|=d_k-t_k(\sigma)$. Note that $m_k(\sigma)\geq 1$, since $k\in \sigma^{-1}(\mathcal N_k)\cap L_\flat$. 

If $k$ has rank $r$ in the relative order induced by $\tau$ on $\sigma^{-1}(\mathcal N_k)\cap L_\flat$, then exactly $r-1$ elements of this set precede $k$. Their images under $\sigma$ are already occupied neighbours of $v_k$. The number of available neighbours is then
\[
M_{k,\sigma,\tau} = t_k(\sigma) + (m_k(\sigma) - r +1) = d_k - r +1.
\]
Since the first $n-h$ positions are a uniform permutation of $L_\flat$, the relative rank $r$ is uniform in $\{1,\dots,m_k(\sigma)\}$. Therefore,
\[
\frac{1}{h!(n-h)!}  \big|\{\tau\in\widetilde S_n:\, M_{k,\sigma,\tau}=i\}\big|=\frac{1}{m_k(\sigma)} = \frac{1}{d_k - t_k(\sigma)}
\]
for $i=t_k(\sigma)+1,\dots,d_k$, and equals $0$ for $i=1,2,\ldots,t_k(\sigma)$. It follows that
\[
\mathcal H_\flat
\le
\frac{1}{|\mathcal M_{\mathrm{perf}}(G)|}
\sum_{k\in L_\flat}
\sum_{\sigma\in\mathcal M_{\mathrm{perf}}(G)}\sum_{i=t_k(\sigma)+1}^{d_k}
\frac{\log i}{d_k-t_k(\sigma)}  = \frac{1}{|\mathcal M_{\mathrm{perf}}(G)|}
\sum_{\sigma\in\mathcal M_{\mathrm{perf}}(G)}
\sum_{k\in L_\flat}
H_{t_k(\sigma)}(d_k),
\]
recalling that
\[
H_t(d)=\frac{1}{d-t}\log\frac{d!}{t!}.
\]
This completes the proof.
\end{proof}

\section{Proof of Theorems \ref{main_thm_his1} and \ref{main_thm}}

In the following, we first show the proof of the general case. 

\begin{proof} [Proof of Theorem \ref{main_thm}]
If $G$ has no perfect matching, then the asserted inequality is trivial. Hence we assume that $\mathcal M_{\mathrm{perf}}(G)\neq\emptyset$.

Let $\Sigma$ be a random variable that is uniformly distributed on $\mathcal{M}_{\mathrm{perf}}(G)$. Take $L_\flat=\{1,2,\ldots,n-h\}$ and $L_\sharp = \{n-h+1,\cdots,n\}$. With the same notations as in Lemma \ref{lem_terminal_set}, we obtain that
\[
\log |\mathcal M_{\mathrm{perf}}(G)| \le \frac{1}{|\mathcal M_{\mathrm{perf}}(G)|}
\sum_{\sigma\in\mathcal M_{\mathrm{perf}}(G)}
\sum_{k=1}^{n-h}
H_{t_k(\sigma)}(d_k) + H\big(\Sigma|_{L_\sharp}\, \big|\, \Sigma|_{L_\flat}\big).
\]

Regarding the second term on the right-hand side, assume that $\Sigma|_{L_\flat}=\sigma_\flat$ is known, for some injection $\sigma_\flat:\, L_\flat\rightarrow [n]$, i.e., the first $n-h$ vertices are revealed. Then $\Sigma|_{L_\sharp}$ can only take values in the set of bijections from $L_\sharp$ to $[n]\setminus \sigma_\flat(L_\flat)$, which has cardinality $h!$. Hence, by Lemma \ref{lem_conditional_log},
\[
H\big(\Sigma|_{L_\sharp}\, \big|\, \Sigma|_{L_\flat}\big) \le \mathbb{E} \log h! = \log h!.
\]

As for the first term, for convenience, denote it as $\mathcal{H}_\flat$. At this stage, let $\sigma\in \mathcal M_{\mathrm{perf}}(G)$ be fixed. Recall that $t_k(\sigma)=|\mathcal N_k\cap \sigma(L_\sharp)|$. In view of $\sigma(k)\in \mathcal N_k\cap \sigma(L_\flat)$, we have $0\leq t_k(\sigma)\leq \min\{h, d_k-1\}$. Noticing that $t_k(\sigma)$ counts the number of neighbours of $v_k$ from $W(\sigma(L_\sharp))$, the sum $\sum_{k=1}^{n-h} t_k(\sigma)$ counts the total number of edges between $V(L_\flat)$ and $W(\sigma(L_\sharp))$. Since $W(\sigma(L_\sharp))$ consists of $h$ vertices in $W$, their total degree is at most the sum of the largest $h$ degrees in $W$. Moreover, there are at least $h$ edges between $V(L_\sharp)$ and $W(\sigma(L_\sharp))$,
namely the $h$ matching edges used by $\sigma$. Hence
\[
\sum_{k=1}^{n-h}t_k(\sigma)\le
\sum_{j=1}^h D_{n+1-j}-h
=
M_h.
\]
Consequently,
\[
\mathcal{H}_\flat
\le
\max\left\{
\sum_{k=1}^{n-h}H_{t_k}(d_k):
0\le t_k\le \min\{h,d_k-1\},\
\sum_{k=1}^{n-h}t_k\le M_h
\right\}.
\]

For every $k$, recalling $\Delta_t(d) = H_{t+1}(d)-H_t(d)$. We always have $\Delta_t(d)> 0$ for $0\le t\le d-2$, since
\[
\frac{d!}{t!} =
\prod_{j=t+1}^{d}j > (t+1)^{d-t} \qquad (0\le t\le d-2).
\]
Now
\[
H_{t_k}(d_k)
=
H_0(d_k)+\sum_{s=0}^{t_k-1}\Delta_s(d_k),
\]
where the second sum is empty if $t_k=0$. Since $\sum\nolimits_{k=1}^{n-h}t_k\le M_h$, at most $M_h$ marginal increments can be chosen in total. The increments associated with a feasible vector
$(t_k)_{k=1}^{n-h}$ satisfy additional prefix constraints, but dropping
these constraints can only increase the maximum. Therefore their total
is at most the sum of the largest $q$ elements of $\Gamma$. Hence
\[
\mathcal{H}_\flat
\le
\sum_{k=1}^{n-h}H_0(d_k)
+
\sum_{r=1}^q\lambda_r,
\]
with $q=\min\{M_h,|\Gamma|\}$ and $\lambda_r$ $(r=1,\ldots,q)$ being the largest possible increments from
the multiset $\Gamma$. Combining the above upper bounds together, we conclude that
\[
\log |\mathcal M_{\mathrm{perf}}(G)|
\le
\log h!
+
\sum_{k=1}^{n-h}H_0(d_k)
+
\sum_{r=1}^q\lambda_r.
\]
The proof is completed.
\end{proof}

Now we prove the special case with explicit bound. 

\begin{proof} [Proof of Theorem \ref{main_thm_his1}]
Applying Theorem \ref{main_thm} with $ h=1 $, we have $ M_1 = D_n - 1 $. Thus the multiset of marginal increments is $\Gamma = \{\Delta_0(d_i):1\le i\le n-1,\ d_i\ge2\}$. Thus $\Gamma$ consists precisely of the increments $\Delta_0(d_i)$ corresponding to the degrees $d_i>1$. 
By Theorem \ref{main_thm}, the upper bound is
\[
|\mathcal M_{\mathrm{perf}}(G)| \le \prod_{i=1}^{n-1}(d_i!)^{\frac{1}{d_i}} \prod_{r=1}^{q} 2^{\lambda_r},
\]
where $ q = \min\{D_n-1, |\Gamma|\} $, and $ \lambda_1 \ge \lambda_2 \ge \cdots \ge \lambda_q $ are the largest $ q $ elements in $ \Gamma $.

By definition, we have $ \Delta_0(d)> 0 $ for all $ d \ge 2 $. Furthermore, Lemma \ref{ineq_2} shows that $ \Delta_0(d) $ is non-increasing as the degree $ d $ increases.
Since the degrees are sorted as $ d_1 \le d_2 \le \cdots \le d_{n-1} $, the elements in $\Gamma$ correspond exactly to the degrees starting from $d_\ell$, where $d_\ell$ is the first degree greater than $1$. These increments naturally appear in non-increasing order. Therefore, the largest $q$ increments are $\Delta_0(d_i)$ for $i = \ell, \ell+1, \cdots, p$, where $p = \min\{D_n + \ell - 2, n-1\}$. (If $\ell > p$, this index range is empty, meaning no increments are selected.) 

Thus, the sum of the selected top increments yields:
\[
\sum_{r=1}^{q} \lambda_r = \sum_{i=\ell}^{p} \Delta_0(d_i) = \sum_{i=\ell}^{p} (H_1(d_i) - H_0(d_i)).
\]

Replacing this into the upper bound, and noting that $(d_i!)^{\frac{1}{d_i}} = 1$ for $i < \ell$, we get
\begin{align*}
|\mathcal M_{\mathrm{perf}}(G)| &\le \prod_{i=1}^{n-1} (d_i!)^{\frac{1}{d_i}} \prod_{i=\ell}^{p} 2^{H_1(d_i) - H_0(d_i)} \\
 &= \prod_{i=1}^{\ell-1} 2^{H_0(d_i)} \prod_{i=\ell}^{p} 2^{H_1(d_i)} \prod_{i=p+1}^{n-1} 2^{H_0(d_i)} \\
&= \prod_{i=\ell}^{p} (d_i!)^{\frac{1}{d_i-1}} \prod_{i=p+1}^{n-1} (d_i!)^{\frac{1}{d_i}}.
\end{align*}

This completes the proof.
\end{proof}

\begin{remark}
For general $h\ge2$, a feasible vector $(t_k)$ may contain entries $t_k\ge2$, and hence higher-order increments $\Delta_1,\Delta_2,\ldots$ may occur. Unlike the increments $\Delta_0(d)$, these higher-order increments do not admit a uniform global ordering across different degrees.

Whether $\Delta_0(d_2)$ is larger than $\Delta_1(d_1)$ depends intricately on the specific values of $d_1$ and $d_2$. For instance, when $d_1 = d_2 \ge 3$, we have $\Delta_0(d_2) > \Delta_1(d_1)$; but for a fixed $d_1$, since $\Delta_0(d_2) \to 0$ as $d_2 \to \infty$, we eventually have $\Delta_0(d_2) < \Delta_1(d_1)$ for sufficiently large $d_2$.

Because of this crossover behavior, the largest $M_h$ elements in $\Gamma$ will be a complex mixture of $\Delta_0, \Delta_1, \cdots$ depending completely on the exact degree distribution. Therefore, one cannot specify a fixed, formulaic partitioning index to explicitly assign the exponents for the factorials as we did for $h=1$.
\end{remark}

Now we end this section with the following example.

\begin{example} \label{ex1}
Let $2\le h\le n/2$. We construct a bipartite graph $F_{n,h}$ with vertex classes $V=A\cup B\cup U$ and
\[
W=\{x_1,y_1,\ldots,x_h,y_h\}\cup\{z_1,\ldots,z_{n-2h}\},
\]
where
\[
A=\{a_1,\ldots,a_h\},\qquad
B=\{b_1,\ldots,b_{n-2h}\},\qquad
U=\{u_1,\ldots,u_h\}.
\]
Thus both vertex classes have size $n$. The edges of $F_{n,h}$ are defined as follows:
\[
N(a_i)=\{x_i,y_i\}, \quad (1\le i\le h),\qquad N(b_j)=\{z_j\}, \quad (1\le j\le n-2h),
\]
and every vertex $u_i\in U$ is adjacent to every vertex of $W$.

We first determine the number of perfect matchings. Every edge $b_jz_j$ is forced. For each $i\in[h]$, the vertex $a_i$ may be matched to either $x_i$ or $y_i$, and these choices are independent because the sets $\{x_i,y_i\}$ are pairwise disjoint. After these choices have been
made, exactly one vertex from each pair $\{x_i,y_i\}$ remains unmatched. Thus exactly $h$ vertices of $W$ remain, and they can be matched arbitrarily to the $h$ vertices of $U$. Consequently,
\[
|\mathcal M_{\mathrm{perf}}(F_{n,h})| = 2^h h!.
\]

We now apply Theorem~\ref{main_thm} with the vertices in $U$ chosen as the terminal vertices. The degrees on the $V$-side are $\deg(b_j)=1$, $\deg(a_i)=2$ and $\deg(u_i)=n$. Every vertex of $W$ has degree $h+1$: it has exactly one neighbour in $A\cup B$ and is adjacent to all $h$ vertices of $U$. Hence $D_1=\cdots=D_n=h+1$ and 
\[
M_h = \sum_{j=1}^{h}D_{n+1-j}-h = h(h+1)-h = h^2.
\]

The degree-$1$ vertices contribute no marginal increments. For each degree-$2$ vertex $a_i$, the only possible increment is $\Delta_0(2) = H_1(2)-H_0(2) =  \frac12$. Hence
\[
\Gamma = \Bigg\{
\underbrace{\frac12,\ldots,\frac12}_{h\text{ copies}}
\Bigg\}, \qquad q=\min\{h^2,h\}=h.
\]
Theorem~\ref{main_thm} therefore gives
\[
|\mathcal M_{\mathrm{perf}}(F_{n,h})| 
\le
h! \prod_{v\in A\cup B}(\deg(v)!)^{\frac{1}{\deg(v)}}
\prod_{r=1}^{h}2^{\frac{1}{2}}  = h!\,(2!)^{\frac{h}{2}}\,2^{\frac{h}{2}}  = 2^h h!.
\]
Thus our upper bound is exact for $F_{n,h}$.

In contrast, Br\'egman's theorem gives
\[
|\mathcal M_{\mathrm{perf}}(F_{n,h})| \le (2!)^{\frac{h}{2}}(n!)^{\frac{h}{n}} = 2^{\frac{h}{2}}(n!)^{\frac{h}{n}}.
\]
Since the sequence $(m!)^{1/m}$ is increasing and $n\ge 2h$, we have $(n!)^{h/n}\ge ((2h)!)^{1/2}$. It follows that the ratio between Br\'egman's bound and our bound is at least
\[
\frac{2^{\frac{h}{2}}\sqrt{(2h)!}}{2^h h!} = \sqrt{\frac{\binom{2h}{h}}{2^h}} > 1
\]
for every $h\ge2$. In particular, when $n=2h$, Stirling’s formula gives the asymptotic term $\frac{2^{h/2}}{(\pi h)^{1/4}}$ as $h\to\infty$. Thus Br\'egman's bound is exponentially larger than our bound as $h\to\infty$, whereas Theorem~\ref{main_thm} gives the exact number of perfect matchings.
\end{example}

\section{Proof of Theorem \ref{main_thm_c4}}
Now we complete the proof for $C_4$-free bipartite graphs. 

\begin{proof} [Proof of Theorem \ref{main_thm_c4}]
As in the previous proof, we assume without loss of generality that $\mathcal{M}_{\mathrm{perf}}(G)\neq \emptyset$. Let $\Sigma$ be a random variable that is uniformly distributed on $\mathcal{M}_{\mathrm{perf}}(G)$. Take $L_\flat=[n]\setminus L$ and $L_\sharp=L$, where $L$ is the index set of the neighbours of $w_n$. Then $V(L_\sharp)=N(w_n)$ and $h=|L_\sharp|=D_n$. By Lemma \ref{lem_terminal_set}, we have
\[
\log |\mathcal M_{\mathrm{perf}}(G)|
\le
\frac{1}{|\mathcal M_{\mathrm{perf}}(G)|}
\sum_{\sigma\in\mathcal M_{\mathrm{perf}}(G)}
\sum_{k\in L_\flat}
H_{t_k(\sigma)}(d_k)
+
H\big(\Sigma|_{L_\sharp}\mid \Sigma|_{L_\flat}\big).
\]

We first estimate the contribution from the vertices in $L_\flat$. For a fixed perfect matching $\sigma$, the quantity $\sum_{k\in L_\flat}t_k(\sigma)$ counts the number of edges between $V(L_\flat)$ and $W(\sigma(L_\sharp))$. Since $W(\sigma(L_\sharp))$ consists of $D_n$ vertices in $W$, the total number of edges incident to these vertices is at most the sum of the largest $D_n$ degrees in $W$, namely $\sum_{j=1}^{D_n}D_{n+1-j}$. 

We now estimate how many of these edges lie inside $V(L_\sharp)\times W(\sigma(L_\sharp))$. Since $w_n$ is adjacent to every vertex in $V(L_\sharp)$, it contributes exactly $D_n$ such edges. Moreover, every vertex $w\in W(\sigma(L_\sharp))\setminus\{w_n\}$ 
has at least one neighbour in $V(L_\sharp)$, because its matched partner under $\sigma$ belongs to $V(L_\sharp)$. Since $G$ is $C_4$-free, the vertices $w$ and $w_n$ have at most one common neighbour. Hence $w$ has exactly one neighbour in $V(L_\sharp)$. Therefore, the number of edges between $V(L_\sharp)$ and $W(\sigma(L_\sharp))$ is exactly $D_n+(D_n-1) = 2D_n-1$. Consequently,
\[
\sum_{k\in L_\flat}t_k(\sigma) \le \sum_{j=1}^{D_n}D_{n+1-j}-(2D_n-1) = M'.
\]

Therefore, with the same argument as in the proof of Theorem \ref{main_thm},
\[
\frac{1}{|\mathcal M_{\mathrm{perf}}(G)|}
\sum_{\sigma\in\mathcal M_{\mathrm{perf}}(G)}
\sum_{k\in L_\flat}
H_{t_k(\sigma)}(d_k)
\le
\sum_{k\in L_\flat}H_0(d_k)
+
\sum_{r=1}^{q'}\mu_r ,
\]
where $\mu_r$ are the largest $q'$ elements of $\Gamma'$.

It remains to estimate the terminal entropy term. Fix a value $\sigma_\flat\in \range(\Sigma|_{L_\flat})$. After the matching on $V(L_\flat)$ is revealed, the unmatched vertices of $W$ are precisely $W([n]\setminus \sigma_\flat(L_\flat))$. Since no vertex outside $V(L_\sharp)=N(w_n)$ is adjacent to $w_n$, we have $w_n\in W([n]\setminus \sigma_\flat(L_\flat))$. 
Now consider a vertex $w\in W([n]\setminus \sigma_\flat(L_\flat))\setminus\{w_n\}$. In any completion of the partial matching $\sigma_\flat$, the vertex $w$ must be matched to a vertex in $V(L_\sharp)$. Hence $w$ has at least one neighbour in $V(L_\sharp)$. Since $G$ is $C_4$-free and $w$ and $w_n$ already have the common neighbour structure described above, this neighbour is unique. 
Therefore, after $\sigma_\flat$ is fixed, all matches involving the vertices in $W([n]\setminus \sigma_\flat(L_\flat))\setminus\{w_n\}$ are forced. The remaining vertex of $V(L_\sharp)$ must then be matched to $w_n$. Hence the completion of $\sigma_\flat$ to a perfect matching is unique. Consequently, $H\big(
\Sigma|_{L_\sharp}
\mid
\Sigma|_{L_\flat}=\sigma_\flat
\big)=0$. Averaging over $\sigma_\flat$ gives $H\big(
\Sigma|_{L_\sharp}
\mid
\Sigma|_{L_\flat}
\big)=0$. 

Combining the above estimates, we obtain
\[
\log |\mathcal M_{\mathrm{perf}}(G)|
\le
\sum_{k\in L_\flat}H_0(d_k)
+
\sum_{r=1}^{q'}\mu_r .
\]
Since $L_\flat=[n]\setminus L$, the conclusion follows by exponentiating both sides.
\end{proof}

\begin{example} \label{ex2}
Fix an integer $r\ge 2$. Let $F_r$ be the bipartite graph with vertex classes $V=\{x,u_1,\ldots,u_r\}$, $W=\{y,z_1,\ldots,z_r\}$, and edge set
\[
E(F_r)
=
\{u_i y,\ u_i z_i,\ xz_i:\, 1\le i\le r\}.
\]
Equivalently, $F_r$ is the union of $r$ internally disjoint paths $x z_i u_i y$ of length $3$ between $x$ and $y$.

The graph $F_r$ is $C_4$-free. Indeed, for $i\ne j$, the vertices $u_i$ and $u_j$ have the unique common neighbour $y$, while $x$ and $u_i$ have the unique common neighbour $z_i$. Thus no two vertices in $V$ have more than one common neighbour.

We first count the perfect matchings of $F_r$. In any perfect matching, the vertex $y$ must be matched to one of the vertices $u_i$. Once $yu_i$ is chosen, every vertex $u_j$ with $j\ne i$ must be matched to $z_j$, and consequently $x$ must be matched to $z_i$. Conversely, each choice of $i\in[r]$ gives a perfect matching. Therefore
\[
|\mathcal M_{\mathrm{perf}}(F_r)|=r.
\]

We now apply Theorem~\ref{main_thm_c4}. Choose $w_n=y$. Then $D_n=\deg(y)=r$ and $L:=\{i\in [r]:\, v_i\in N(y)\}$. Thus $x$ is the only vertex of $V$ outside $V(L)$, and $d(x)=r$. The degrees of the vertices in $W$ are $\deg(y)=r$ and $\deg(z_i)=2$ for $1\le i\le r$. Hence the sum of the largest $r$ degrees in $W$ is $r+2(r-1)=3r-2$, and therefore $M' = (3r-2)-2r+1 = r-1$.

Since $x$ is the only vertex outside $V(L)$, the increment multiset is $\Gamma'= \{\Delta_0(r),\Delta_1(r),\ldots,\Delta_{r-2}(r)\}$. Thus
\[
|\Gamma'|=r-1,
\qquad
q'=\min\{M',|\Gamma'|\}=r-1.
\]
Theorem~\ref{main_thm_c4} now gives
\[
|\mathcal M_{\mathrm{perf}}(F_r)|  \le (r!)^{\frac{1}{r}} \prod_{t=0}^{r-2}2^{\Delta_t(r)} = 2^{H_0(r)+\sum_{t=0}^{r-2}\Delta_t(r)} = 2^{H_{r-1}(r)} = 2^{\log r} = r.
\]
Thus the upper bound in Theorem~\ref{main_thm_c4} is exact for $F_r$.

On the other hand, $F_r$ has $r+1$ vertices in each part and $3r$ edges. Hence the bound of Araujo, Balogh, and Wang gives
\[
|\mathcal M_{\mathrm{perf}}(F_r)|
\le
2^{\frac{3r-(r+1)}{3}}
=
2^{\frac{2r-1}{3}}.
\]
For $r=2$, the two bounds agree. For every $r\ge3$, one has $r<2^{(2r-1)/3}$. Indeed, if $R_r:=\frac{2^{(2r-1)/3}}{r}$, then $R_2=1$ and
\[
\frac{R_{r+1}}{R_r} = 2^{\frac{2}{3}}\,\frac{r}{r+1}>1
\qquad (r\ge2).
\]
Therefore our bound is strictly better for every $r\ge3$, and the ratio $\frac{2^{(2r-1)/3}}{r}$ grows exponentially with $r$.
\end{example}

\begin{remark} \label{remark_C6free}
For disconnected graphs, Theorem~\ref{main_thm_c4} may be substantially stronger when applied separately to each connected component. For example, let $G$ be the disjoint union of $s$ copies of $C_6$. A direct
application of Theorem~\ref{main_thm_c4} to the whole graph gives $|\mathcal M_{\mathrm{perf}}(G)| \le 2^{(3s-1)/2}$, whereas the Araujo--Balogh--Wang bound gives the sharp value $2^s$. However, Theorem~\ref{main_thm_c4} gives the exact bound $2$ for each individual copy of $C_6$. Since the number of perfect matchings is multiplicative over connected components, applying our theorem componentwise also gives
\[
|\mathcal M_{\mathrm{perf}}(G)|\le 2^s.
\]
The loss in the direct global application occurs because the terminal set $L=N(w_n)$ lies in only one component, while the other components are treated through their Br\'egman-type contributions.
\end{remark}

\section*{Acknowledgments}
The authors are grateful to Hao Chen for helpful discussions.

\nocite{*}
\bibliographystyle{plain}
\bibliography{reference}

@article{Radhakrishnan1997AnEP,
  author    = {J. Radhakrishnan},
  title     = {An Entropy Proof of {Bregman}'s Theorem},
  journal   = {Journal of Combinatorial Theory, Series A},
  volume    = {77},
  pages     = {161--164},
  year      = {1997},
  url       = {https://api.semanticscholar.org/CorpusID:26508239}
}

@article{bregman1973some,
  author    = {L. M. Br{\`e}gman},
  title     = {Some properties of nonnegative matrices and their permanents},
  journal   = {Doklady Akademii Nauk},
  volume    = {211},
  number    = {1},
  pages     = {27--30},
  year      = {1973},
  publisher = {Russian Academy of Sciences}
}

@article{cutler2011entropy,
  author    = {J. Cutler and A. J. Radcliffe},
  title     = {An entropy proof of the {Kahn-Lov{\'a}sz} theorem},
  journal   = {The Electronic Journal of Combinatorics},
  pages     = {P10--P10},
  year      = {2011}
}

@article{schrijver1978short,
  author    = {A. Schrijver},
  title     = {A short proof of {Minc}'s conjecture},
  journal   = {Journal of Combinatorial Theory, Series A},
  volume    = {25},
  number    = {1},
  pages     = {80--83},
  year      = {1978},
  publisher = {Elsevier}
}

@book{alon2016probabilistic,
  author    = {N. Alon and J. H. Spencer},
  title     = {The probabilistic method},
  year      = {2016},
  publisher = {John Wiley \& Sons}
}

@article{minc1963upper,
  author    = {H. Minc},
  title     = {Upper bounds for permanents of $\left(0,1\right)$-matrices},
  journal   = {Bulletin of the American Mathematical Society},
  volume    = {69},
  number    = {6},
  pages     = {789--792},
  year      = {1963},
  doi       = {10.1090/S0002-9904-1963-11031-9},
  mrnumber  = {MR0155843},
  url       = {https://doi.org/10.1090/S0002-9904-1963-11031-9}
}

@article{alon2008maximum,
  author    = {N. Alon and S. Friedland},
  title     = {The Maximum Number of Perfect Matchings in Graphs with a Given Degree Sequence},
  journal   = {The Electronic Journal of Combinatorics},
  volume    = {15},
  year      = {2008},
  month     = {04},
  doi       = {10.37236/888}
}

@book{Egorychev,
  author    = {G. Egorychev},
  title     = {Permanents},
  series    = {Series of Discrete Mathematics},
  publisher = {SFU},
  address   = {Krasnoyarsk},
  year      = {2007}
}

@article{friedland2008upper,
  author    = {S. Friedland},
  title     = {An upper 
  bound for the number of perfect matchings in graphs},
  journal   = {arXiv preprint arXiv:0803.0864},
  year      = {2008}
}

@article{araujo2022maximum,
  author    = {I. Araujo and J. Balogh and Y. Wang},
  title     = {Maximum determinant and permanent of sparse 0-1 matrices},
  journal   = {Linear Algebra and its Applications},
  volume    = {645},
  pages     = {194--228},
  year      = {2022},
  publisher = {Elsevier}
}
\end{document}